\documentclass[a4paper,reqno]{amsart}
\usepackage{graphicx} 

\title{Lower semicontinuity  of nonlocal $L^\infty$ energies on $SBV_0(I)$}


\author[Jos\'e Matias]{Jos\'e Matias}
\address{Departamento de Matem\'atica, Instituto Superior T\'ecnico. University of Lisbon,
Portugal}
\email{jose.c.matias@tecnico.ulisboa.pt}
\author[Pedro Miguel Santos]{Pedro Santos}
\address{Departamento de Matem\'atica, Instituto Superior T\'ecnico. University of Lisbon,
Portugal}
\email{pedro.m.santos@tecnico.ulisboa.pt}

\author[Elvira Zappale]{Elvira Zappale}
\address{Department of Basic and Applied Sciences for Engineering,
Sapienza-University of Rome, via
A. Scarpa,  16 (00161) Roma, Italy and
CIMA, Universidade de \'Evora, Portugal}
\email{elvira.zappale1@uniroma1.it}



\usepackage[T1]{fontenc}
\usepackage[latin1]{inputenc}
\usepackage{amssymb}
\usepackage{amsfonts}
\usepackage{amsmath,mathtools}
\usepackage{mathrsfs}
\usepackage{graphicx}
\usepackage[dvipsnames,usenames]{color}
\usepackage{hyperref}
\usepackage{esint}
\usepackage{verbatim}
\usepackage{bbm}
\usepackage{tikz}
\usepackage{soul}
\definecolor{PineGreen}{rgb}{0.0, 0.26, 0.15}

\setcounter{MaxMatrixCols}{10}

\newcommand{\R}[1]{\mathbb{R}^{#1}}

\DeclareMathOperator*{\supess}{ess\,sup}

\renewcommand{\leq}{\leqslant}

\newcommand{\average}{{\mathchoice {\kern1ex\vcenter{\hrule
height.4pt width 8pt depth0pt}
\kern-11pt} {\kern1ex\vcenter{\hrule height.4pt width 4.3pt
depth0pt} \kern-7pt} {} {} }}

\mathchardef\emptyset="001F

\providecommand{\U}[1]{\protect\rule{.1in}{.1in}}
\numberwithin{equation}{section}
\setlength{\textwidth}{6.5in}
\setlength{\textheight}{9in}
\setlength{\oddsidemargin}{0in}
\setlength{\evensidemargin}{0in}
\setlength{\topmargin}{-0.5in}

\newtheorem{definition}{Definition}[section]
\newtheorem{theorem}[definition]{Theorem}

\newtheorem{proposition}[definition]{Proposition}
\newtheorem{remark}[definition]{Remark}

\makeindex

\begin{document}

 \begin{abstract}  
 \vspace{-12pt}   

We characterize the lower-semicontinuity
of nonlocal one-dimensional energies of the type
\[{\rm ess}\!\!\!\!\!\!\!\!\sup_{(s,t) \in I\times I} h([u](s), [u](t)),\] where $I$ is an open and bounded interval in the real line, $u \in SBV_0(I)$ and $[u](r):= u(r^+)- u(r^-)$, with $r\in I$.                   
\vspace{8pt}

 \noindent\textsc{MSC (2020):}  49J45, 26B25, 26A45, 26A51.
 
 \noindent\textsc{Keywords:} Cartesian submaximality, nonlocal $L^\infty$ energies, interfacial energy, lower semicontinuity, nonlocal gradients, supremal functionals.

 \noindent\textsc{Date:} \today.
 \end{abstract}

\maketitle

 \section{Introduction}

 In recent years a great attention has been devoted to nonlocal functionals in the form of double integrals because of their many different applications; see, for instance \cite{BMCP, BP, BDM, BN}, and the bibliography contained therein, among a wide literature. New mathematical topics, inspired by machine
learning issues, presented models  involving  supremal
(or $L^\infty$-)functionals in the nonlocal setting, see \cite{GZ, KS, KRZ, KZ, RB, S} where energies are either of the form 
\[{\rm ess}\!\!\!\!\!\!\!\!\sup_{(x,y) \in \Omega\times \Omega} f(u(x), u(y)),\]
with $\Omega \subset \mathbb R^N $, $f:\mathbb R^m \times \mathbb R^m \to \mathbb R$ a suitable lower semicontinuous and coercive function, and $u \in L^\infty(\Omega;\mathbb R^m)$, possibly satisfying a PDE's constraint, or
\[
{\rm ess}\sup_{x \in \Omega} f(x, u(x), \nabla^\alpha u(x)),\]
with $f:\Omega \times \mathbb R^m\times \mathbb R^{N\times m}\to \mathbb R $ a suitable integrand and $u \in \mathcal S^{\alpha,\infty}_{u_0}(\Omega;\mathbb R^m)$, with $\alpha \in (0,1)$, $\nabla^\alpha u$ being the Riesz weak fractional gradient and  the space $S^{\alpha,\infty}_{u_0}(\Omega;\mathbb R^m) = \{u \in L^\infty (\mathbb R^m), \nabla^\alpha u \in L^\infty(\mathbb R^{m\times N}), u= u_0 \in \mathbb R^N\setminus\Omega\}$.   

\smallskip

The interest towards supremal functionals started with the pioneering works by Aronsson, see \cite{A1,A2,A3,A4} and has been later developed by many authors see \cite{ABP, BJW}, both in connection with the so-called power-law approximation (see, for instance, \cite{CDP} and \cite{BEZ} for developments in wider contexts than classical Sobolev spaces) and in relation with the $\infty$-laplacian type equations. We refer to \cite{AK, KM1, KM2} for recent advances on this topic.

$L^\infty$ energies defined at jump points of piecewise continuous functions on the real line were first analyzed in \cite{ABC}.
Indeed, the $L^1$-lower semicontinuity of the functional
\begin{equation}\label{Gdef}
{\rm ess}\sup_{t \in I} g([u](t)),
\end{equation}
where $I$ is a bounded open interval of the real line, $u$ is a piecewise constant function in the sense of \cite{AFP} and 
\begin{equation}\label{applim}[u](r):= \lim_{\varrho \to r^+} u(\varrho)- \lim_{\varrho \to r^-}u(\varrho),
\end{equation} was characterized in terms of the lower semicontinuity and the {\it submaximality} of the density $g$, which can be seen as the supremal counterpart of $BV$-ellpiticty introduced by \cite{AB} to characterize the lower semicontinuity of elliptic integrands defined on piecewise constant functions.

More specifically, we recall that a function $g : \mathbb R\setminus \{0\} \to \mathbb R$ is said to be submaximal if the following inequality
holds true for all $x_1, x_2 \in \mathbb R \setminus \{0\}$ such that $x_1 \not= -x_2$:
\begin{equation*}
g(x_1 + x_2)\leq \max\{g(x_1), g(x_2)\}.
\end{equation*}
It is also possible to consider $g$ as defined in whole $\mathbb R$, even
though its value in $0$ is never taken into account, by setting $g(0) = \inf g$. Note
that, in \cite{ABC}, $g$ is submaximal and lower semicontinuous in $\mathbb R \setminus \{0\}$ if and only if its
extension to $0$ is submaximal and lower semicontinuous.

Taking into account the models described before, we consider here the following nonlocal energy
\[H(u):={\rm ess}\!\!\!\!\!\!\!\!\!\!\!\!\!\sup_{(s,t) \in S(u)\times S(u)} h([u](s), [u](t)),
\] where, as in \eqref{Gdef}, $u$ is a piecewise constant function on $I$ and $S(u)$ denotes the set of discontinuity points of $u$.
\color{cyan} In fact, under natural assumptions on the supremand $h:(\mathbb R \setminus\{0\})\times (\mathbb R\setminus \{0\})\to \mathbb R$, which we refer to Definition \ref{defsymdiag} for,  we detect necessary and sufficient conditions on $h$ for the $L^1$-lower semicontinuity of $H$, namely, we prove the following result:
\begin{theorem}\label{asprop3.1ABC} Let $h:(\mathbb R\setminus \{0\}) \times (\mathbb R\setminus \{0\}) \to \mathbb R$ be diagonal and symmetric and let $H$ be the functional defined by \eqref{Hdef}.
$H$ is lower semicontinuous with respect to $L^1(I)$ convergence if and only if $h$ is lower semicontinuous and 
Cartesian submaximal.
\end{theorem}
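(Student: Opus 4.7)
I will prove the two implications separately, adapting the approach used in \cite{ABC} for the local scalar case to the nonlocal product structure of $H$.

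\emph{Necessity.} Assume $H$ is $L^1$-lower semicontinuous. To get the lower semicontinuity of $h$ at a point $(x,y) \in (\mathbb R\setminus\{0\})^2$ along sequences $x_n \to x$, $y_n \to y$, I fix two distinct points $s_0,t_0\in I$ and construct piecewise constant $u_n,u\in SBV_0(I)$ with $S(u_n)=S(u)=\{s_0,t_0\}$, $[u_n](s_0)=x_n$, $[u_n](t_0)=y_n$, $[u](s_0)=x$, $[u](t_0)=y$. Then $u_n \to u$ in $L^1(I)$ and, by the diagonal and symmetric character of $h$,
\[H(u_n)=\max\{h(x_n,y_n),\,h(x_n,x_n),\,h(y_n,y_n)\},\]
with the analogous expression for $H(u)$. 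Specializing $x_n=x$ or $y_n=y$ isolates each of these three terms in turn, and the lower semicontinuity of $H$ then forces the joint lower semicontinuity of $h$. For Cartesian submaximality I would instead split each of the two jumps of the limit: take $u$ with jumps $x_1+x_2$ at $s_0$ and $y_1+y_2$ at $t_0$, and $u_n$ with four close jumps of sizes $x_1,x_2$ near $s_0$ and $y_1,y_2$ near $t_0$, well defined under the hypotheses $x_1\neq -x_2$ and $y_1\neq -y_2$. The resulting inequality $H(u)\leq\liminf_n H(u_n)$ is exactly the Cartesian submaximality bound prescribed by Definition \ref{defsymdiag}.

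\emph{Sufficiency.} Assume $h$ is lower semicontinuous and Cartesian submaximal, and let $u_n\to u$ in $L^1(I)$. It is enough to show that for every pair $(s,t)\in S(u)\times S(u)$ one has $h([u](s),[u](t))\leq\liminf_n H(u_n)$, since the essential supremum over $(s,t)$ then yields $H(u)\leq\liminf_n H(u_n)$. I use the standard structure of $L^1$-convergence for $SBV_0$ functions on the line: for $n$ large and for small disjoint neighbourhoods $V_s,V_t$ of $s$ and $t$, there exist finite families $\{s^n_i\}\subset S(u_n)\cap V_s$ and $\{t^n_j\}\subset S(u_n)\cap V_t$ with $\sum_i [u_n](s^n_i)\to [u](s)$ and $\sum_j [u_n](t^n_j)\to [u](t)$. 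Iterating Cartesian submaximality finitely many times on these two clusters, each step dominates the current value by the maximum of $h$ on pairs in $S(u_n)\times S(u_n)$, hence by $H(u_n)$. Passing to the liminf and using the lower semicontinuity of $h$ delivers the required inequality.

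\emph{Main obstacle.} The key difficulty, compared with \cite{ABC}, lies in the double-cluster iteration of the sufficiency step: splitting a jump of $u$ into many clustering jumps of $u_n$ produces simultaneously cross terms $h([u_n](s^n_i),[u_n](t^n_j))$ and intra-cluster terms $h([u_n](s^n_i),[u_n](s^n_k))$, $h([u_n](t^n_j),[u_n](t^n_k))$, all of which must stay below $H(u_n)$ throughout the reduction. The diagonality and symmetry of $h$, together with the precise algebraic form of Cartesian submaximality, are what close the iteration. One must also perturb the cluster sizes slightly along the sequence in order to avoid the forbidden cases of opposite jumps, exploiting the lower semicontinuity of $h$ to recover the sharp bound in the limit.
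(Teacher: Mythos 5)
Your overall strategy coincides with the paper's: necessity via explicit two‑jump competitors and jump‑splitting, sufficiency by locating clusters of jumps of $u_n$ near each $s,t\in S(u)$, iterating Cartesian submaximality over the two clusters, and finishing with the lower semicontinuity of $h$. The sufficiency step is exactly the paper's argument (there the two clusters are $S(u_n)\cap(t-\varepsilon,t+\varepsilon)$ and $S(u_n)\cap(t'-\varepsilon,t'+\varepsilon)$ for a suitably small $\varepsilon$), and your identification of the cross terms and intra‑cluster terms as the point where diagonality, symmetry and the precise form of \eqref{new condition} are needed is correct.

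There is, however, a genuine gap in your necessity argument for Cartesian submaximality. By splitting \emph{both} jumps, your four‑jump competitor $u_n$ yields
\[
h(x_1+x_2,\,y_1+y_2)\;\le\;\max\bigl\{\,h(a,b)\;:\;a,b\in\{x_1,x_2,y_1,y_2\}\,\bigr\},
\]
whose right-hand side contains the extra terms $h(x_i,y_j)$, $h(y_1,y_2)$ and the diagonal values. This inequality is \emph{not} the condition of Definition \ref{Csubmaxdef} (your citation of Definition \ref{defsymdiag} is also off), and it does not imply it: setting $y_1+y_2=y$ leaves uncontrolled terms such as $h(y_1,y_2)$ on the right, so you cannot recover $h(w_1+w_2,y)\le\max\{h(w_1,y),h(w_2,y),h(w_1,w_2)\}$ from it. The correct construction, as in the paper, splits only \emph{one} jump: take $u_n$ with three jumps $y$ at $t_0$, $w_1$ at $t_1$ and $w_2$ at $t_1+\tfrac1n$, converging to $u_\infty$ with jumps $y$ and $w_1+w_2$; then $H(u_n)=\max\{h(y,w_1),h(y,w_2),h(w_1,w_2)\}$ (the diagonal terms being absorbed by diagonality of $h$) and the lower semicontinuity of $H$ gives exactly \eqref{new condition}. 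A minor further remark: in your lower‑semicontinuity step the "isolation of the three terms'' is unnecessary, since diagonality already gives $H(u_n)=h(x_n,y_n)$ outright.
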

\color{black}

\section{Notation and preliminaries}
For the reader's convenience, we mainly adopt the notation in \cite{ABC}.
Let $I \subset \R{}$ be open and bounded. We recall that a function $u \in L^1(I)$ is
a function of bounded variation if its distributional derivative $Du$ is a measure
in $I$. In this case $u$ is approximately differentiable almost everywhere and its
a.e. derivative is denoted by $u'$; moreover the set of approximate discontinuity
points (or jump set) of $u$, denoted by $S(u)$, is at most countable and at each point
$t \in S(u)$ there exist the right hand and left hand approximate limits, denoted by
$u(t^\pm)$. We denote by $[u](t)$ the difference between the two sides approximate limits, i.e. $[u](t):= u(t^+)- u(t^-)$,
the so called jump of $u$ at $t,$  as in \eqref{applim}. With this notation, the distributional derivative $Du$ admits the
decomposition
$Du = u'\lfloor {\mathcal L}^1 + D^su = u'\lfloor {\mathcal L}^1 +
\sum_{t \in S(u)}
(u(t^+) - u(t^-))\delta_t + D^cu$,
where $\mathcal L^1$ stands for the one-dimensional Lebesgue measure, $\delta_t$ is the Dirac delta
at $t$, and $D^cu$, the Cantor part of $Du$, is a non-atomic measure which is orthogonal
to the Lebesgue measure. The notation $D^su$ stands for the singular part of the
measure $Du$ (with respect to the Lebesgue measure). The space of functions of
bounded variation in $I$ is denoted by $BV (I).$ Its subset, constituted by functions which $D^c u$ is null, is denoted by $SBV(I)$ and the ones where $u'$ is also null is constituted by functions which take countably many constant values, i.e. such that $|D^j u|(I) < +\infty$.
Furthermore, when the set where $u$ takes constant values has finite perimeter, we say, as in \cite{BC}, that $u \in SBV_0(I)$, namely the singular set of $u$ is a Caccioppoli's partition, equivalently the set $S(u)$ has finite perimeter, i.e. $\mathcal H^0(S(u))<+\infty$.
In the subsequent analysis, we will focus on this class of functions, which, as proven in \cite{AFP} can be identified with the set of piecewise constant functions, also denoted by $PC(I)$.  In the following, as in \cite{ABC}, We will also adopt this notation.

For the sake of exposition, $I:=(a,b)$, $a,b \in \mathbb R$, $a<b$,
\begin{definition}\label{PCfunct_def}
A function $u : (a, b) \to \mathbb R$ is said to be piecewise constant on $(a, b)$ if there exist
points $a = t_0 < t_1 < \dots < t_N < t_{N+1} = b$ such that
\begin{equation}\label{3.11} u(t) \hbox{ is constant a.e. on }(t_{i-1}, t_i) \hbox{ for all }i = 1, 2, \dots ,N + 1.
\end{equation}
\end{definition}
Then $S(u)$ can be seen as the minimal set $\{t_1, t_2, \dots, t_N\}\subset (a, b)$ such that \eqref{3.11} holds. The subspace of $L^1(a, b)$ of all such $u$ is denoted by $PC(a, b)$.

We aim at studying the lower semicontinuity with respect to the $L^1$ topology of the following nonlocal functional
$H(u) : PC(I) \to \mathbb R$ of the form
\begin{equation}\label{Hdef}
H(u) = \;\;\;\;{\rm ess}\!\!\!\!\!\!\!\!\!\!\!\!\sup_{(t ,s) \in S(u)\times S(u)}h([u](t), [u](s)), 
\end{equation}
where $h : (\mathbb R \setminus\{0\})\times (\mathbb R\setminus \{0\}) \to \overline{\mathbb R}.$

We observe that there is no loss of generality in assuming $h$ bounded, since, as emphasized in \cite{ABP}, the analysis regarding the behaviour of the functional $H$ is not affected by replacing $h$ by the composition $f \circ h$, with $f$ strictly increasing, such as $f(t)= \arctan t$.

As mentioned earlier, the goal is to characterize the lower semicontinuity of the functional $H$.

 It is also possible to define $h$ at points of the type $(\zeta,0)$ or $(0,\theta)$, with an abuse of notation, imposing that $h(\zeta,0)=h(0,\theta):=\inf_{(\xi, \eta) \in (\mathbb R\setminus\{0\}) \times (\mathbb R \setminus \{0\})}h(\xi,\eta).$
Clearly, it is possible to consider $h$ defined in all $\mathbb R \times \mathbb R$, tacitly assuming the above values in $(\zeta, 0)$ and $(0,\theta)$, $\zeta, \theta \in \mathbb R$. 
With this choice, the functional $H$ in \eqref{Hdef} can be replaced by 
\begin{equation*}
H(u) = {\rm ess}\!\!\!\!\!\!\sup_{(t,s) \in I\times I}h([u](t), [u](s)), 
\end{equation*}
without affecting the analysis that we will present below. 
 \color{black}

\section{Lower semicontinuity} 
We start by showing some intermediate results that are important for the proof of  Theorem \ref{asprop3.1ABC} below. \color{black}

 As first observed in the integral settind in \cite{P1,P2} when dealing with nonlocal functionals, and later expanded in \cite{KZ, KRZ} in the supremal framework, some symmetry of the densities involved in the nonlocal models appear naturally. 

For every $m \in \mathbb N$, given a set $E \subset \mathbb R^m \times \mathbb R^m$, following \cite[(1.6)]{KZ}, we introduce  \begin{align*}\label{hat E}\hat E:=\{(\xi, \eta) \in E: (\xi,\xi), (\eta, \eta), (\eta,\xi) \in E\}.
	\end{align*} 

Let $f:\mathbb R \times \mathbb R\to \overline{\mathbb R}$, and $c \in \mathbb R$.

The $c$-(sub)level set of $f$ is defined as 
\begin{equation*}
	L_c(f):=\{(\xi,\eta)\in \mathbb R\times \mathbb R: f(\xi,\eta)\leq c\}.
	\end{equation*}

Given $h:(\mathbb R\setminus \{0\}) \times (\mathbb R\setminus \{0\}) \to \mathbb \overline{\mathbb R}$, define
\begin{equation}\label{hath} \hat{h}(\zeta, \eta) := \inf \{ c \in \mathbb{R}: (\zeta, \eta) \in \widehat{L_c(h)}, (\zeta, \eta) \in \mathbb{R} \times \mathbb{R} \}.
	\end{equation}
    \color{cyan}
\begin{definition}\label{defsymdiag}
A function $h:(\mathbb R\setminus \{0\}) \times (\mathbb R\setminus \{0\}) \to \overline{\mathbb R}$ is said to be {\it symmetric}
\color{cyan} if $h(\xi,\eta)= h(\eta,\xi)$ for every $\xi, \eta \in \mathbb R\setminus \{0\}$.

 A symmetric function $h:(\mathbb R\setminus \{0\}) \times (\mathbb R\setminus \{0\}) \to \overline{\mathbb R}$ is said to be {\it diagonal} if $h(\xi,\eta)= \hat{h}(\xi,\eta)$ for every $\xi, \eta \in \mathbb R\setminus \{0\}$.
\end{definition}
\color{black}
Observe that, as in \cite{ABC},  the functional $H$ in \eqref{Hdef} coincides with
\begin{align*}
 \mathcal H^0-{\rm ess}\!\!\!\!\!\!\!\!\!\!\!\!\!\sup_{(s,t) \in  S(u)\times S(u)} h([u](s), [u](t)) \in \overline{\mathbb R}.
	\end{align*}

We stress that there is no loss of generality in assuming $h$ diagonal and symmetric when evaluating $H$ in \eqref{Hdef}, and, as observed before, that $h$ is finite valued.

First, as proven in \cite[eq. (2.8)]{KRZ}, $\hat h$ in \eqref{hath} admits the following equivalent representation
\begin{align}\label{hhat}
\hat h(\xi, \eta)
=
\max\{ h(\xi,\xi), h(\eta,\eta), h(\eta,\xi), h(\xi,\eta)\}.
\end{align}

\begin{proposition}\label{propdiagsym}
Let $h:(\mathbb R\setminus \{0\}) \times (\mathbb R\setminus \{0\}) \to \mathbb R$ and let $\hat h$ be as in \eqref{hath}. Then
\begin{align*}
	\hat H(u):= \mathcal H^0-{\rm ess}\!\!\!\!\!\!\!\!\!\!\!\!\!\sup_{(t,s) \in S(u)\times S(u)} \hat h([u](s), [u](t))=H(u).
\end{align*}
\end{proposition}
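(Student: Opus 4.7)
The plan is to reduce the statement to an elementary manipulation using the representation formula \eqref{hhat} and the fact that, for $u \in PC(I)$, the set $S(u)$ is finite.

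First I would observe that since $u$ is piecewise constant, $S(u) = \{t_1,\dots,t_N\}$ is finite, so $S(u) \times S(u)$ is a finite set on which the counting measure $\mathcal H^0$ gives positive mass to every nonempty subset. Consequently both essential suprema in the definitions of $H(u)$ and $\hat H(u)$ reduce to ordinary maxima:
\begin{equation*}
H(u) = \max_{(s,t)\in S(u)\times S(u)} h([u](s),[u](t)), \qquad \hat H(u) = \max_{(s,t)\in S(u)\times S(u)} \hat h([u](s),[u](t)).
\end{equation*}
(If $S(u)=\emptyset$, i.e.\ $u$ is constant, both quantities equal the infimum of $h$ by the convention introduced before the proposition, so the identity is trivial.)

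The inequality $H(u) \leq \hat H(u)$ is immediate: by \eqref{hhat} one has $h(\xi,\eta) \leq \hat h(\xi,\eta)$ for every pair $(\xi,\eta)$, and hence the inequality passes to the maximum over $S(u)\times S(u)$.

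For the reverse inequality $\hat H(u) \leq H(u)$ I would fix an arbitrary pair $(s,t) \in S(u)\times S(u)$ and expand $\hat h$ via \eqref{hhat}:
\begin{equation*}
\hat h([u](s),[u](t)) = \max\bigl\{h([u](s),[u](s)),\, h([u](t),[u](t)),\, h([u](t),[u](s)),\, h([u](s),[u](t))\bigr\}.
\end{equation*}
The key point is that each of the four pairs $(s,s)$, $(t,t)$, $(t,s)$, $(s,t)$ again lies in $S(u)\times S(u)$, so every one of the four values on the right is bounded above by $H(u)$. Taking the maximum over $(s,t)\in S(u)\times S(u)$ then gives $\hat H(u)\leq H(u)$, closing the proof.

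I do not anticipate a genuine obstacle here; the argument is essentially bookkeeping, and the only delicate conceptual point is the reduction of $\mathcal H^0$-essential sup to a plain maximum, which is justified by the finiteness of $S(u)$ for $u \in PC(I)$. The proposition is really a consequence of the self-closure property encoded in the definition of $\widehat{L_c(h)}$ combined with the fact that $S(u)\times S(u)$ is invariant under swapping coordinates and restricting to the diagonal.
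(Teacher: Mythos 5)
Your proof is correct and follows essentially the same route as the paper: both reduce the $\mathcal H^0$-essential supremum to a maximum over the finite set $S(u)\times S(u)$, obtain $H\leq\hat H$ from the pointwise bound $h\leq\hat h$, and obtain the reverse inequality by expanding $\hat h$ via \eqref{hhat} and noting that all four pairs $(s,s)$, $(t,t)$, $(t,s)$, $(s,t)$ again lie in $S(u)\times S(u)$. If anything, your version is slightly tidier, since you bound $\hat h$ at every pair rather than only at a maximizer and you explicitly dispose of the case $S(u)=\emptyset$.
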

\begin{proof}[Proof]
It can be easily verified that the symmetry assumption is not restrictive. It remains to check the diagonality. To this end, it immediately results that $h(\xi, \eta)\leq \hat h(\xi, \eta)$,
hence $H \leq \hat H$.
To prove the opposite inequality, we can observe that ${\mathcal H}^0- {\rm ess sup}$ in \eqref{Hdef} is indeed a maximum and is evaluated in a finite set of values, thus it will be a value of the type $h([u](\bar s), [u](\bar t))$ with $\bar s, \bar t \in S(u)$, but, clearly by the very definition of \eqref{Hdef}, it is greater than $h([u](\bar t), [u](\bar s)), h([u](\bar t), [u](\bar t)), h([u](\bar s), [u](\bar s))$, hence $h([u](\bar s), [u](\bar t)) = \hat h([u](\bar s), [u](\bar t))$,
 hence $\hat H \leq H$.
\end{proof}
 
 \bigskip
In view of the above result, in the sequel, when not explicitly mentioned, we will assume that $h$ is diagonal and symmetric.

\begin{definition}\label{Csubmaxdef}
Let $h:(\mathbb R\setminus \{0\}) \times (\mathbb R\setminus \{0\}) \to \mathbb R $ be symmetric and diagonal. We say that $h$ is {\it Cartesian submaximal} if 
\begin{equation}\label{new condition}
h(w_1 + w_2, y) \leq \text{max} \{ h(w_1, y), h(w_2, y), h(w_1, w_2)\},
\end{equation}
for every $w_1, w_2, y \in \mathbb R$.
\end{definition}

\begin{remark}\label{new rem}
It is also possible to consider Cartesian submaximal functions $h :\mathbb R^+\times \mathbb R^+ \to \mathbb R$. Then a Cartesian submaximal extension $\tilde h$ of such functions to $(\mathbb R\setminus\{0\})\times (\mathbb R \setminus \{0\})$ could be the following
\begin{equation*}
\tilde{h}(\xi,\eta):=\left\{ 
\begin{array}{ll}
h(\xi,\eta) & \hbox{ if } (\xi,\eta) \in \mathbb R^+ \times \mathbb R^+,
\\
{\rm ess }\sup_{(\xi,\eta) \in \mathbb R^+ \times \mathbb R^+} h(\xi,\eta) &\hbox{ otherwise,}
\end{array}
\right.
\end{equation*}
\end{remark}

Condition \ref{new condition} characterizes the $L^1(I)$-lower semicontinuity of $H$ in \eqref{Hdef}.
In fact, the following result holds.

\begin{proof}[Proof of Theorem \ref{asprop3.1ABC}]
	
	Let $H$ be $L^1(I)$-lower semicontinuous, then, taking two sequences $w_n \not = v_n \in \mathbb R$, converging to $w \not =v \in \mathbb R$ respectively and defining for every $t \in I$ and $n \in \mathbb N\cup\{+\infty\}$,
	\begin{equation*}
    u_n(t):=\left\{\begin{array}{ll}
		z &\hbox{ if } t \leq t_0,\\
		z + w_n & \hbox{ if } t_0 < t \leq t_1,\\
		z +w_n + v_n & \hbox{ if } t > t_1,
	\end{array}
	\right. \hbox{ and } u_\infty(t):=\left\{\begin{array}{ll}
		z &\hbox{ if } t \leq t_0,\\
		z + w & \hbox{ if } t_0 < t \leq t_1,\\
		z +w + v & \hbox{ if } t > t_1,
	\end{array}
	\right.
	\end{equation*}
	  with $t_0 < t_1 \in I$,
     and taking into account that $u_n \to u_\infty$ in $L^1(I)$,  $H(u_n)=  h(w_n, v_n)$,  and $H(u_\infty)= h(w,v)$, it results that 
	$h(w,v)\leq \liminf_n h(w_n, u_n)$,
	hence $h$ is lower semicontinuous.
	
	Moreover $h$ turns out to be Cartesian submaximal, indeed,  consider $y, w_1,w_2 \in \mathbb R$ such that we can consider the sequence 
	$$
	u_n(t):=\left\{\begin{array}{ll}
		z &\hbox{ if } t \leq t_0,\\
		z + y & \hbox{ if } t_0 < t \leq t_1,\\
		z + y+ w_1& \hbox{ if } t_1 \leq t \leq t_1+ \frac{1}{n},\\
		z+ y+ w_1+w_2  &\hbox{ if } t > t_1 + \frac{1}{n}.
	\end{array}
	\right.
	$$
	Clearly $u_n \to u_\infty$ in $L^1(I)$, where $$u_\infty(t)=\left\{\begin{array}{ll}
		z &\hbox{ if } t \leq t_0,\\
		z + y & \hbox{ if } t_0 < t \leq t_1,\\
		z + y +w_1+w_2 & \hbox{ if } t > t_1,
	\end{array}
	\right.$$
	Then, by the $L^1(I)$-lower semicontinuity of $H$,  
	\begin{align*}
	\supess_{s, t \in S(u)}h([u_\infty](s), [u_\infty](t))= & h(y, w_1+w_2)\leq \\
	\liminf_{n \to +\infty}\supess_{s, t \in S(u)}h([u_n](s), [u_n](t))= &\max\{h(y, w_1), h(y, w_2), h(w_1,w_2)\},
	\end{align*}
where in the above equalities we have exploited the symmetry and diagonality of $h$.

In order to prove the opposite implication, assume that $h$ is Cartesian submaximal and lower semicontinuous.

With the same construction as in \cite[Proof of Theorem 3.1]{ABC}, take $u_n$  converging to $u_\infty$ in $L^1_{loc}(I)$, hence, up to a not relabelled subsequence, also converging a.e. in $I$.
Since $u_\infty \in PC(I)$, there exists $\varepsilon > 0$ such that
$\varepsilon < \inf\{|t-s| : t, s \in S(u_\infty), t \not= s\}$. Fix $t, t' \in S(u_\infty)$, we can suppose that
$u_n(t \pm \varepsilon) \to u_\infty(t \pm \varepsilon) = u_\infty(t\pm)$,
$u_n(t' \pm \varepsilon) \to u_\infty(t' \pm \varepsilon) = u_\infty(t'\pm)$
and that, for all $n$, $(t \pm \varepsilon), (t' \pm \varepsilon)  \not \in  S(u_n).$

By the lower semicontinuity of $h$ it results 
$$
h([u_\infty](t), [u_\infty](t'))= h([u_\infty](t \pm \varepsilon), [u_\infty](t'\pm \varepsilon)) \leq \liminf_{n\to +\infty} h(u_n(t + \varepsilon)- u_n(t-\varepsilon), u_n(t'+\varepsilon)- u_n(t'-\varepsilon))
$$ 
By the Cartesian submaximality, the right hand side can be estimated as
\begin{align*}
 h(u_n(t + \varepsilon)- u_n(t-\varepsilon), u_n(t'+\varepsilon)- u_n(t'-\varepsilon))\leq \\
 \sup_{s, s'\in S_{u_n,\varepsilon, t, t'}}
 h([u_n](s), [u_n](s')) \leq 
 	\sup_{s,s' \in S(u_n)} h([u_n](s), [u_n](s')),
\end{align*}
\color{cyan} where the set $S_{u_n,\varepsilon, t, t'}:=(S(u_n)\cap (t-\varepsilon, t+\varepsilon) )\cup
 		(S(u_n)\cap (t'-\varepsilon, t' +\varepsilon))$. \color{black}
Consequently, taking the supremum as $t, t' \in S(u)$ on the left hand-side, we have the desired inequality.
$$
\sup_{t, t'\in S(u)}h([u](t), [u](t'))\leq \liminf_{n\to +\infty} \sup_{t, t' \in S(u_n)} h([u_n](t), [u_n](t')).
$$
This concludes the proof.
\end{proof} 

We conclude by making some comments and presenting some examples devoted to a better understanding of the notion of Cartesian submaximality.
\color{black}

\begin{remark}
\begin{itemize}
\item The composition of a submaximal function with a subadditve one is not Cartesian submaximal as the following example shows
$h(x,y):=\frac{|\sin (x+y)|}{x+y}$: indeed $h= g \circ +$, with $g:\mathbb R\setminus \{0\} \to \mathbb R$ defined as $g(x):=\frac{|\sin x|}{x}$ and $+:\mathbb R\times \mathbb R\to \mathbb R$;
\color{cyan}
It suffices to consider $x,y,w=\pi/2$ to verify $1=h(x+y,w)> \max\{h(x,y), h(y,w), h(x,w)\}= 0$.
\color{black}
\item	Each function $h: \mathbb R\times \mathbb R \to (0,+\infty)$ which is separately submaximal, i.e. such that $h(y, w_1+w_2)\leq \max \{h(y, w_1), h(y,w_2)\}$ is also Cartesian submaximal. 
	In particular, \color{cyan}if a symmetric function $h: \mathbb R^+ \times \mathbb R^+ \to (0,+\infty)$ is such that $h(\cdot, y)$ is decreasing for  a.e. $y$ and we extend it  
    as
    \begin{equation}\label{hstar} h^\star(w, y) = \left\{\begin{array}{ll}
h(w,y), &\hbox{ if } (w, y)\in \mathbb R^+\times\mathbb R^+,\\
0 &\hbox{ if } w =0\hbox{ or }y =0,\\
 +\infty &\hbox{ otherwise,}
\end{array}
\right.\end{equation}
     then $h^\star$ is separately submaximal.
     As an example one can consider $h(w,y)=\frac{1}{w+y}$ for $w,y \in \mathbb R^+$ and $h^\star$ as in \eqref{hstar}.
     \color{black}
	
	The same holds if $h$ is the ratio of a subadditive and superadditive function in each variable.

 \item  Examples of separately submaximal functions defined in $\mathbb R \times \mathbb R$ with values in $\overline{\mathbb R}$ (and hence Cartesian submaximal) resulting immediately from \cite[Example 3.3]{ABC}:
\[ h(w, y) = \left\{\begin{array}{ll}
\max\left\{ \frac{|\sin w|}{w}, \frac{| \sin y|}{y} \right\}, &\hbox{ if } (w,y)\in \mathbb R^+\times\mathbb R^+,\\
\color{cyan}
0 &\hbox{ if } w=0 \hbox{ or } y=0, \\
\color{cyan} +\infty &\hbox{ otherwise,}
\end{array}
\right.\]

\color{black}
and 
\[ h_\alpha(w,y) = \left\{\begin{array}{ll}  \alpha\frac{|\sin w|}{w} + (1 - \alpha) \frac{|\sin y|}{y} &\hbox{ if } (w,y)\in \mathbb R^+\times\mathbb R^+, \\
\color{cyan}
0 &\hbox{ if } w=0 \hbox{ of }y=0 ,\\
\color{cyan} +\infty &\hbox{ otherwise.}
 \color{black}
\end{array}
\right. \]
$\alpha \in (0,1).$

\end{itemize}

\end{remark}

\bigskip

\bigskip
{\noindent{\bf Acknowledgments.}

The research of J. M. was funded by FCT/Portugal through project UIDB/04459/2020 with DOI identifier 10-54499/UIDP/04459/2020.
E. Z. acknowledges the support of Piano Nazionale di Ripresa e Resilienza (PNRR) - Missione 4 ``Istruzione e Ricerca''
- Componente C2 Investimento 1.1, ``Fondo per il Programma Nazionale di Ricerca e
Progetti di Rilevante Interesse Nazionale (PRIN)" - CUP 853D23009360006.  She is a member of the Gruppo Nazionale per l'Analisi Matematica, la Probabilit\`a e le loro Applicazioni (GNAMPA) of the Istituto Nazionale di Alta Matematica ``F.~Severi'' (INdAM).
She also acknowledges the partial support of the INdAM - GNAMPA Project ``Metodi variazionali per problemi dipendenti da operatori frazionari
isotropi e anisotropi'' coordinated by Alessandro Carbotti, CUP ES324001950001.


\begin{thebibliography}{widestlabel}
\bibitem{ABP}{\sc  E. Acerbi, G. Buttazzo, F. Prinari}, {\it On the class of functionals which can be represented by a supremum}, {\rm J. Convex Anal.} {\bf 9}, (2002), 225-236.
\bibitem{ABC} {\sc R. Alicandro, A. Braides and M. Cicalese},
{\it {$L^\infty$} energies on discontinuous functions},
{\rm Discrete Contin. Dyn. Syst. Series A},{\bf 12},
(2005),
n. 5, {905--928}.
\bibitem{AB} {\sc L. Ambrosio and A. Braides}, {\it Functionals defined on partitions in sets of finite perimeter.
              {II}. Semicontinuity, relaxation and homogenization},
{J. Math. Pures Appl. (9)},
 {\bf 69}, n.3, (1990), 307--333.
\bibitem{AFP} {\sc L. Ambrosio, N. Fusco and D. Pallara,} 
{\it Functions of bounded variation and free discontinuity
              problems},
 {\rm Oxford Mathematical Monographs},
 {The Clarendon Press, Oxford University Press, New York},
(2000), {xviii+434}.   



\bibitem{A1}{\sc  G. Aronsson,} {\it  Minimization problems for the functional $\sup_x F(x,f(x), f'(x))$.} {\rm Arkiv f\"ur Mat.} {\bf 6} (1965), 33 - 53.
\bibitem{A2}{\sc  G. Aronsson}, {\it Minimization problems for the functional $\sup_x F(x,f(x), f'(x))$ II.}  {\rm Arkiv f\"ur Mat.} {\bf 6}, (1966), 409 - 431.
\bibitem{A3} {\sc G. Aronsson}, {\it Extension of functions satisfying Lipschitz conditions.} Arkiv f\"ur Mat. 6 (1967), 551 - 561.


\bibitem{A4}
 {\sc G. Aronsson},
{\it On the partial differential equation $u^2_xu_{xx}+2u_xu_yu_{xy}+u^2_yu_{yy}=0$.}
{\rm Ark. Mat.}, {\bf 7}, (1968), 395--425.  

\bibitem{AK}
{\sc B. Ayanbayev and N. Katzourakis},
{\it Vectorial variational principles in {$L^\infty$} and their
characterisation through {PDE} systems.}
{\rm Appl. Math. Optim.}, {\bf 83}, (2021), 833--848.

\bibitem{BL}{\sc  E. N. Barron and W. Liu,} {\it Calculus of Variation in $L^\infty$.} {\rm Appl. Math. Optim.} (3) {\bf 35},  (1997),  237--263.
\bibitem{BJW}{\sc  E. N. Barron, R. Jensen and C.Y. Wang,} {\it Lower
semicontinuity of $L^\infty$ functionals}. {\rm Ann. I.H. Poincar\'e }, {\bf
4}. (2001), 495--517.
\bibitem{BMCP}{\sc J. C. Bellido, C. Mora-Corral and P. Pedregal,} {\it Hyperelasticity as a $\Gamma$-limit of peridynamics when the horizon goes
to zero.} {\rm  Calc. Var. Partial Differential Equations}, {\bf 54}(2), (2015), 1643-1670.
\bibitem{BP} {\sc J. Bevan and P. Pedregal,} {\it A necessary and sufficient condition for the weak lower semicontinuity of one-dimensional
nonlocal variational integrals.} {\rm Proc. Roy. Soc. Edinburgh Sect. A,} {\bf 136}(4), (2006), 701-708.
\bibitem{BEZ} {\sc G. Bertazzoni, M. Eleuteri and E. Zappale,} {\it Approximation of $L^\infty$ functionals with generalized Orlicz norms}, {\rm Annali di Matematica Pura ed Applicata}, (2024), in press.
\bibitem{BC}{\sc A. Braides and V. Chiad\'o Piat, }
{\it Integral representation results for functionals defined on $SBV(\Omega;\mathbb R^m)$.}
 {\rm Jour. Math. Pures et Appl.}, {\bf 75}, (1996), 595-626.
 \bibitem{BDM} {\sc A. Braides and G. Dal Maso, }
 {\it Validity and failure of the integral representation of $\Gamma$-limits of convex nonlocal functionals}, {\rm Journal of Functional Analysis},
{\bf 286}, n. 6, (2024).

\bibitem{BN} {\sc  H. Brezis and H.-M. Nguyen,} {\it  Nonlocal functionals related to the total variation and connections with image
processing.} {\rm Ann. PDE}, {\bf 4}(1), (2018), Art. 9, 77.


\bibitem{CDP}{\sc  T. Champion, L. De Pascale and F. Prinari,} {\it   $\Gamma$-convergence and absolute minimizers for supremal functionals}. {\rm ESAIM Control Optim. Calc. Var.}, {\bf 10} (2004),  14--27.

\bibitem{GZ} {\sc G. Gargiulo  and E. Zappale, } {\it A sufficient condition for the lower semicontinuity of
              nonlocal supremal functionals in the vectorial case},
{\rm Eur. J. Math.},
{\bf 9}, (2023), n.3,  {Paper No. 75, 8}.
\bibitem{KM1} {\sc N. Katzourakis and R. Moser},{\it Variational problems in $L^\infty$ involving semilinear second order differential operators},
{\rm ESAIM - Control, Optimisation and Calculus of Variations}, {\bf 29}, (2023).
\bibitem{KM2}{\sc N. Katzourakis and R. Moser}, {\it Minimisers of supremal functionals and mass-minimising 1-currents},
{\rm Calculus of Variations and Partial Differential Equations}, {\bf 64}, n. 1, (2025).
\bibitem{KRZ} {\sc C. Kreisbeck, A. Ritorto and E. Zappale},
{\it Cartesian convexity as the key notion in the variational
              existence theory for nonlocal supremal functionals},
{\rm Nonlinear Anal.}, {\bf 225},
(2022), {Paper No. 113111, 33}.
\bibitem{KS} {\sc C. Kreisbeck and H. Schonberger}, {\it Quasiconvexity in the fractional calculus of variations: characterizaion of lower semicontinuity and relaxation}, {\rm Nonlinear Analysis},
{\bf 215}, (2022), 112625.

\bibitem{KZ}{\sc C. Kreisbeck  and E. Zappale}, {\it Lower semicontinuity and relaxation of nonlocal  {$L^\infty$}-functionals},
   {\rm Calc. Var. Partial Differential Equations},
  {\bf 59}, (2020), n. 4, Paper No. 138, 36.

\bibitem{P1}{\sc P. Pedregal}, {\it Nonlocal variational principles},
{\rm Nonlinear Anal.},
  {\bf 29}, n.12, (1997), 1379--1392.
  \bibitem{P2}{\sc P. Pedregal}, {\it On nonlocality in the calculus of variations},
 {\rm SeMA J.},
{\bf 78}, n.4, (2021), 435--456.

\bibitem{RB}{\sc T. Roith and L. Bungert}, {\it Continuum Limit of Lipschitz Learning on Graphs}, {\rm Foundations of Computational Mathematics}, {\bf 393}, 431, 
 23, 2, (2023). 

\bibitem{S}{\sc H. Shonberger}, {\it Characterization of Lower Semicontinuity and Relaxation
of Fractional Integral and Supremal Functionals}, {\rm Master Thesis, University of Utrecht}, (2021), {https://studenttheses.uu.nl/bitstream/handle/20.500.12932/41293/MasterThesisFinal.pdf?sequence=1.}
 \end{thebibliography}
\end{document}